\newtheorem{theorem}{Theorem}[section]
\newtheorem{lemma}[theorem]{Lemma}
\begin{document}

\begin{frontmatter}

\title{An iterative scaling function procedure for solving scalar non-linear hyperbolic balance laws}

\author[mymainaddress]{Gino I. Montecinos\corref{mycorrespondingauthor}}
\cortext[mycorrespondingauthor]{Corresponding author}
\ead{gino.montecinos@uaysen.cl}

\address[mymainaddress]{Department of Natural Sciences and Technology, Universidad de Ays\'en, Coyhaique, Chile}

\begin{abstract}


The scaling of the exact solution of a hyperbolic balance law generates a family of scaled problems in which the source term does not depend on the current solution. These problems are used to construct a sequence of solutions whose limiting function solves the original hyperbolic problem. Thus this gives rise to an iterative procedure. Its convergence is demonstrated both theoretically and analytically. The analytical demonstration is in terms of a local in time convergence and existence theorem in the $L^2$ framework for the class of problems in which the source term $s(q)$ is bounded, with $s(0) = 0$, is locally Lipschitz and belongs to $C^2(\mathbb{R}) \cap H^1 (\mathbb{R}) $. A convex flux function, which is usual for existence and uniqueness for conservation laws, is also needed. 
For the numerical demonstration, a set of model equations is solved, where a conservative finite volume method using a low-dissipation flux is implemented in the iteration stages.  The error against reference solutions is computed and compared with the accuracy of a conventional first order approach in order to assess the gaining  in accuracy of the present procedure.  Regarding the accuracy only a first order scheme is explored because the development of a useful procedure is of interest in this work, high-order accurate methods should increase the computational cost of the global procedure.  Numerical tests show that the present approach is a feasible method of solution.  


\end{abstract}

\begin{keyword}

Hyperbolic balance laws \sep Conservation laws with source terms \sep Finite volume schemes \sep Iterative procedures.

\end{keyword}

\end{frontmatter}

\section{Introduction}

Hyperbolic balance laws play a crucial role in describing several phenomena in several fields of research, even source terms can appear artificially in numerical solutions of conservation laws via relaxation approaches, see \cite{Jin:1995a, Kawashima:1987a, Bouchut:1999a, Zeng:1999a} and reference therein. So, independent on the nature of the source term, one of the main issues in balance laws is the existence and uniqueness of solutions and of course, the ability of obtaining them.

For the case of systems coming from the relaxation approach admiting an equilibrium state, that is a state where the source term vanishes, the existence and uniqueness of solutions relies on local solutions around the equilibrium state which can be extended via continuation arguments and entropy assumptions to prove global existence, see \cite{Kawashima:1988a,Chen:1994a,Yong:2001a,Yong:2004a}. 

Balance laws can be written in a quasilinear form through the use of the Jacobian or derivative of flux function in the scalar case. The local existence and uniqueness of bounded measure solutions of quasilinear and semilinear equations can be provided by following classical ODE theory based on fixed point arguments. The requirements are that the initial condition function has to be bounded, measurable and regular enough, flux functions has to be locally Lipschitz continuous and the source term has to be locally bounded, measurable and locally Lipschitz as well, \cite{Bressan:2000a}.  Since quasilinear equations can blow up in finite time, the global existence in the $L^\infty$ and $L^{1}$ frameworks is done only in the case of semilinear equation. See Chapter 3 in \cite{Bressan:2000a} for further details. 

A different apprach was proposed in the pioneering work of Liu \cite{Liu1987}. Here the author has studied some systems of hyperbolic problems where the source term is in resonance with the advective part. By involving the bounded variation and monotone source terms the author has argued through the theory developed for hyperbolic conservation laws \cite{Liu:1981b}, that this class of hyperbolic balance laws have a global unique solution which is bounded also. In subsequent papers \cite{Dias:2002a, Tsuge:2017a} it has been proved the global existence in the $L^\infty$ and $L^{1}$ framework, the solution has been constructed by using entropy-admissible solutions obtained from parabolic equations in which the viscous coefficient goes to zero, it is known as the dimising viscosity method, also has been a key ingredient the compensated compactness of Tartar, \cite{Tartar:1979a}.  In the case of $L^\infty$, the methodology relies on the Kruzkov theory \cite{Kruzkov:1970a} to prove the uniqueness of entropy weak solutions.

Since a constructive proof of existence and uniqueness can lead to a procedure able to be implemented numerically. In this paper, a new local existence in time is presented which results into a feasible strategy for solving scalar hyperbolic balance laws. This is based on the so called {\it iteration algorithm} strategy presented in \cite{Cheng:2000a, CHEN:2001a,Chen:2010a} for solving semilinear elliptic partial differential equations with non-linear source term.  Iterative process is not a novel strategy of demonstration, this techniques has already been implemented for the existence of global solution of hyperbolic conservation laws, \cite{Douglis:1952, Friedrichs:1948a}, where the iteration is carried out in order to linearize the convective terms around a solution obtained in the previous iteration. These methods require flux functions to be uniform Lipschitz with respect to each argument. Furthermore, the initial conditions and the solutions have to be smooth, at least has to contain a continuous first derivative. Since, in hyperbolic equations singularities can appear in finite time, \cite{Dafermos:2016}.  A feasible method has to be able to deal with discontinuous solutions. That is, the method needs to incorporate weak solutions. Here, a new iterative approach to prove local existence and uniqueness in the $L^2$ framework, hable to incoirpates these features, is presented. The prove is for entropy-satisfying solutions, through the vanishing viscosity and the compensated compactness, as carried out in \cite{Dias:2002a}. The demonstration will be based on an iterative process and thus the existence and uniqueness of solutions will ensure the convergence of the procedure for a class of problems in which the source term $s(q)$ is locally bounded, $s(0) = 0$, is locally Lipschitz and belongs to $C^2(\mathbb{R}) \cap H^1 (\mathbb{R}) $.  For the flux function usual requirements as in \cite{Dias:2002a, Tsuge:2017a} are needed. The approach presented in this work, is more closed to that in \cite{Cheng:2000a, CHEN:2001a,Chen:2010a}, in the sense that a sequence of auxiliary problems are constructed by scaling the exact solution and the original equations as well, the main feature of these auxiliary problems is that these contain source terms which do not depend on the current state that means these are decoupled from the state. Then, a convergent sequence of solutions to these auxiliary problems is obtained, where the limiting function is a weak solution of the balance law.

This constructive demonstration can be translated into a numerical procedure in which a scheme for solving hyperbolic balance laws where the source terms do not depend on the sate, is involved. The procedure does not depend on a particular method but some minimal requirements are needed, particularly, the numerical solution must be an entropy satisfying one. Thus conservative scheme in the finite volume framework are suitable methods to be explored.

 For a practical implementation, a space-time mesh common to every auxiliary problem is considered, since the numerical stability in finite volume schemes can depend on the solution, we have to choose a small enough CFL coefficient such that every auxiliary problem is solved with the same stable scheme. In this work, we profit from  the recent low-dissipation scheme  \cite{Toro:2020a}, called by the authors FORCE$-\alpha$ which is suitable for problems requiring small CFL coefficients as needed here.
 
 
To show that the procedure is computationally feasible, the error against reference solutions is computed. Furthermore, a comparison with the accuracy of a conventional first order approach is also carried out to assess the gaining in accuracy of the present approach.

This work, is organized as follows. In the section \ref{sec:scaling-functions}, the formulation of the problem is presented. In the section \ref{sec:convergence}, the existence and uniqueness of solutions and consequently the convergence of the procedure are proved. In the section \ref{sec:experiments}, numerical experiments are shown to illustrate the applicability of the present approach. In section \ref{sec:experiments}, the conclusions and remarks are carried out. 

\section{The scaling function procedure}\label{sec:scaling-functions}
Let us consider the following one dimensional partial differential equation
\begin{eqnarray}
\label{eq:1} 
\begin{array}{c}
\partial_t u + \partial_x f ( u) = s( u  ) \;, x\in [a,b]  \;, t \in (0,T]\\

   u(x,0)                           = u_0(x) \;,

\end{array}
\end{eqnarray}
where $u_0$ is a prescribed function, $f(u)$ and $s(u)$ are the so called flux function and source function, respectively. Let us assume that an exact solution $u$ of (\ref{eq:1}) is available in $[0,T]$, for some $T<\infty$. Then, for any sequence of positive numbers $\beta_0, \beta_1,...$ such that the functions $v^n =
 \frac{ u}{ \beta_n}$ are a scaling of the solution of (\ref{eq:1}). This scaling leads to the scaling of the original equation which has the form
\begin{eqnarray}
\label{eq:system-for-n+1}
\begin{array}{c}
\frac{ \partial }{\partial t}(v^{n+1}) + \frac{ \partial}{\partial x}( \frac{ f( \beta_{n+1} \cdot v^{n+1} )}{\beta_{n+1} }) = \frac{ s(\beta_{n} v^n)}{\beta_{n+1}}  \;,
\\
v^{n+1}(x,0) = \frac{ u_0(x)}{ \beta_{n+1}} \;,
\end{array}
\end{eqnarray} 
and it is referred to us the {\it scaled problem}.

In this work, we are going to be interested on the converse, that is, to identify the conditions which guarantee, that if (\ref{eq:system-for-n+1}) has a solution $v^n$, for a given positive constant $\beta_{n}$, it is possible to find convergent subsequences $\{ v^n\}$ and $\{ \beta_n\}$ which converge to $v^\infty$ and $\beta_\infty$, respectively and $q = \beta_\infty \cdot v^{\infty}$ is a solution of (\ref{eq:1}).

Therefore, the hyperbolic balance laws (\ref{eq:1}) can be solved through the following iterative process, in which a sequence of solutions to auxiliary problems as (\ref{eq:system-for-n+1}) is constructed.

\begin{itemize}

\item {\bf Step 1:} Provide an arbitrary $v^0$ and set  $\beta_0 = 1$. Alternatively, we can take $\beta_0 = \max\{||v^0|| \}$.

\item {\bf Step 2:} Given $v^n $ and $\beta_n$, do solve (\ref{eq:system-for-n+1}). Since the problem depends on $\beta_{n+1}$ which is still unknown, we cannot solve it directly.   However, in the sense of distributions, we can reformulate the problem (\ref{eq:system-for-n+1}) as: Given $v^n$ and $\beta_n$, do solve 
\begin{eqnarray}
\label{eq:aux-equation}
\begin{array}{c}
\partial_t w +\partial_x f(w) = s(\beta_n v^n) \;, t \in [0,T] \;, \\
w(x,0) = u_0(x)\;.
\end{array}
\end{eqnarray}
Then, by using this solution, we define $\beta_{n+1} = \frac{1}{|| w || }$ and $v^{n+1} = w \cdot || w ||$. Here, $|| w ||$ is a suitable norm in space and time.

\item {\bf Step 3:}  If $E_n \leq Tol$ then stop, for some given tolerance $Tol$, where $E_n := |\beta_{n} - \beta_{n+1}|$. Otherwise, go to {\bf Step 2}

\end{itemize}

In the following section we are going to provide the conditions and theoretical results which guarantee the convergence of the procedure for solving hyperbolic balance laws through scaling functions.

\section{Convergence in the $L^2$ framework for the scaled function procedure}\label{sec:convergence}

In this section we are going to present the conditions and the corresponding results of existence and uniqueness  of weak solutions, in the $L^2(\mathbb{R})$ framework, for the procedure in section \ref{sec:scaling-functions}  given by {\bf Step 1}, {\bf Step 2} and {\bf Step 3}. Since balance laws with source terms can blow-up in finite time $t_b$, \cite{Bressan:2000a}, we are going to be interested on solutions up to $T \ll t_b$.

Notice that the equation (\ref{eq:aux-equation}) has the general form
\begin{eqnarray}
\label{eq:model-scaling}
\begin{array}{c}
\frac{\partial u}{\partial t} + \frac{\partial f(u)}{\partial x} = \tilde{s}(x,t) \;, t \in [0,T], x \in \mathbb{R } \;, \\
u(x,0) = u_0(x)\;,  
\end{array}
\end{eqnarray}
where $ \tilde{s}(x,t) = s(\beta_{n} \cdot v^n) $.  Notice that in the {\bf Step 2}, in section \ref{sec:scaling-functions} a sequence of functions $\{v^n \}$ and real numbers $\{ \beta_n\}$ can be generated if the solution to every problem (\ref{eq:model-scaling}) there exists. So, the first task is to prove the existence and uniqueness for these problems.

To apply existent theories the flux function is assumed to satisfy the following:
\begin{itemize}
\item The flux is genuinely non-linear
\begin{eqnarray}
\label{eq:prop-f-1}
f\in C^2 (\mathbf{R}) \;, f''(u)>0 \;, f(0) = f'(0) = 0 \;.
\end{eqnarray}
\item This has the following behaviour at infinity
\begin{eqnarray}
\label{eq:prop-f-2}
|f|\leq C (1 + |u|), |u| \rightarrow \infty \;.
\end{eqnarray}
\end{itemize}

Notice that, in the case of quasilinear equations 
\begin{eqnarray}
\begin{array}{c}
\partial_t q + \lambda \partial_x q = \tilde{s}(x,t) \;,
\end{array}
\end{eqnarray}
global existence and uniqueness is proved in the case in which $\lambda$ is a constant, see \cite{Bressan:2000a} for further details.

Before providing the main results, given constant values $\xi^-,\xi^+$ with $\xi^- < \xi^+$ and $R>0$, let us introduce the constant values $C^\xi_m$ and $C^\xi_p$, defined by 
\begin{eqnarray}
\begin{array}{l}
C^{\xi}_m = \min_{y \in [\xi^-, \xi^+]} \{  \lambda(u_0(y)) \} \;, \\
C^{\xi}_p = \max_{y \in [\xi^-, \xi^+]} \{  \lambda(u_0(y) + R \cdot T) \} \;,
\end{array}
\end{eqnarray}
where $f'(u) = \lambda(u)$. Furthermore, let us define the region $ \tilde{D}_{R}(\xi^-,\xi^+) \subset [0, \infty)  \times \mathbb{R}$, as the set containing the pairs $(\xi, \tau)$ such that
$ \gamma(t), \gamma_s(t) \in  [ \xi^- + t \cdot C_m^\xi, \xi^+ + t \cdot C^\xi_p] \subset \tilde{D}_{R}(\xi^-,\xi^+)  $ for all $t \in [0, \tau]$, where $\gamma(t)$ and $\gamma_s(t)$
are the right lines  given by $ \gamma(t) = \xi + t \lambda(u_0(\xi))$ and $ \gamma_s(t) = \xi + t \lambda(u_0(\xi) + R \cdot T)$.

\begin{lemma}\label{theo-1}
If $\tilde{s}: \mathbb{R} \times [0,T] \rightarrow \mathbb{R}$ satisfies:
\begin{itemize}
\item There exists $K_s$ such that $ | \tilde{s}(x,t) | \leq K_s $.

\item There exist constant values $ \xi^-, \xi^+$, with $\xi^- < \xi^+ $, such that $\tilde{s}$  has a compact support $\omega_s \subseteq \tilde{D}_{K_s}(\xi^-, \xi^+ ) $.
\end{itemize}

If $u_0(x) \in L^{2}(\mathbb{R})$. Then the problem (\ref{eq:model-scaling}) has an exact solution $\bar{u} \in L^2((0,T),\mathbb{R})$, which is also bounded in $ \tilde{D}_{K_s}(\xi^-,\xi^+)$.

\end{lemma}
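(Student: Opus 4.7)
The plan is to construct $\bar{u}$ by the vanishing viscosity method and obtain strong $L^2_{loc}$ convergence via compensated compactness, following the strategy of Dias--Frid and Tsuge referenced above. First I would regularize: take $u_0^\varepsilon \in C^\infty(\mathbb{R}) \cap L^2(\mathbb{R})$ with $u_0^\varepsilon \to u_0$ in $L^2$, and $\tilde{s}^\varepsilon \in C^\infty_c(\mathbb{R}\times[0,T])$ supported in a slight enlargement of $\omega_s$ with $\|\tilde{s}^\varepsilon\|_\infty \le K_s$ and $\tilde{s}^\varepsilon \to \tilde{s}$ in $L^1 \cap L^2$. Then solve the parabolic problem
\begin{equation*}
\partial_t u^\varepsilon + \partial_x f(u^\varepsilon) = \tilde{s}^\varepsilon + \varepsilon\, \partial_{xx} u^\varepsilon, \qquad u^\varepsilon(x,0) = u_0^\varepsilon(x),
\end{equation*}
which admits a smooth solution on $\mathbb{R}\times[0,T]$ by standard parabolic theory. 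The two things to prove are (i) uniform bounds on the family $\{u^\varepsilon\}$ and (ii) that the limit is an entropy solution belonging to $L^2((0,T),\mathbb{R})$ and bounded on $\tilde{D}_{K_s}(\xi^-,\xi^+)$.

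For the pointwise bound on $\tilde{D}_{K_s}(\xi^-,\xi^+)$, I would freeze $\varepsilon = 0$ and reason on characteristics. For a characteristic starting at $\xi \in [\xi^-,\xi^+]$, the identity $u(\gamma(t),t) = u_0(\xi) + \int_0^t \tilde{s}(\gamma(\tau),\tau)\,d\tau$ gives $|u| \le |u_0(\xi)| + K_s T = |u_0(\xi)| + R\,T$, and the speed therefore lies between $\lambda(u_0(\xi))$ and $\lambda(u_0(\xi) + RT)$. The region $\tilde{D}_{K_s}(\xi^-,\xi^+)$ has been defined precisely so that both extreme characteristics remain inside it on $[0,T]$, so the source is never left or re-entered by these trajectories and the estimate closes. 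For the viscous version a parabolic maximum principle (comparison against the explicit supersolution $|u_0(\xi)| + K_s t$ on the strip swept out by the extremal characteristics) transfers this bound to $u^\varepsilon$ uniformly in $\varepsilon$ on $\tilde{D}_{K_s}(\xi^-,\xi^+)$.

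For the global $L^2$ control needed to pass to the limit, multiply the viscous equation by $u^\varepsilon$ and integrate in space: the convective term vanishes after integration by parts (using the entropy flux of $\eta(u)=u^2/2$), the viscous term gives a nonpositive dissipation $-\varepsilon\|\partial_x u^\varepsilon\|_{L^2}^2$, and the source term is estimated by Cauchy--Schwarz as $|\langle \tilde{s}^\varepsilon, u^\varepsilon\rangle| \le K_s|\omega_s|^{1/2}\|u^\varepsilon\|_{L^2}$. Gronwall then yields $u^\varepsilon \in L^\infty(0,T;L^2(\mathbb{R}))$ uniformly in $\varepsilon$, together with a uniform bound on $\sqrt{\varepsilon}\,\partial_x u^\varepsilon$ in $L^2$. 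These are exactly the hypotheses under which Tartar's div--curl / Young-measure argument for strictly convex scalar flux (as in DiPerna and as used in \cite{Dias:2002a}) shows that $\{\eta(u^\varepsilon)_t + q(u^\varepsilon)_x\}$ is precompact in $H^{-1}_{loc}$ for every entropy pair, whence $u^\varepsilon \to \bar{u}$ strongly in $L^1_{loc}$ (hence a.e.) and $\bar{u}$ is an entropy solution. The uniform $L^2$ bound upgrades this to $\bar{u}\in L^2((0,T)\times\mathbb{R})$, and the pointwise bound on $\tilde{D}_{K_s}(\xi^-,\xi^+)$ is preserved under a.e. convergence.

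The main obstacle I expect is the geometric step: verifying that the extremal characteristics really do remain in $\tilde{D}_{K_s}(\xi^-,\xi^+)$ for \emph{all} $t\in[0,T]$ after the source has perturbed $u$ by as much as $RT$. One must check that the monotonicity implicit in convexity of $f$ (so that $\lambda = f'$ is monotone and the characteristic speeds are ordered by the values of $u$), together with the specific definitions of $C^\xi_m$ and $C^\xi_p$, rules out the bad scenario in which a characteristic is pushed out of the region where the a priori source bound is valid, causing the bootstrap to fail. Once that invariance is firmly in hand, all the remaining steps (vanishing viscosity existence, uniform energy bound, and compensated compactness) are applications of well-developed machinery in the cited references.
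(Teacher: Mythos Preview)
Your proposal is correct and follows essentially the same route as the paper: vanishing viscosity, an $L^2$ energy estimate obtained by multiplying by $u^\varepsilon$ and applying Gronwall, and then compensated compactness for the strictly convex flux to pass to the limit. The paper invokes Schonbek's theorem and the result of Dias--Frid in place of Tartar/DiPerna directly, and it is terser about the geometric confinement step you single out (it simply says ``by following the characteristic curves''), but the architecture of the two arguments is the same.
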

\begin{proof}

Let us consider a sequence  $\{ u^\varepsilon \} \in L^2(\mathbb{R})$ of (smooth) solutions of the following parabolic equation 
\begin{eqnarray}
\begin{array}{c}
\frac{\partial u^\varepsilon}{\partial t} + \frac{\partial f (u^\varepsilon)}{\partial x} = \tilde{s}(x,y ) + \varepsilon \frac{\partial^2 u^\varepsilon}{\partial x^2} \;, t\in[0,T] \;,
\\
u^\varepsilon(x,0) = u_0^\varepsilon(x) \;,
\end{array}
\end{eqnarray}
where $  u_0^\varepsilon(x) $ is a converging sequence of smooth functions to $ u_0(x) $. 
Notice that this function has the following entropy condition
\begin{eqnarray}
\begin{array}{c}
   \frac{\partial  }{\partial t} \biggl( \frac{ (u^\varepsilon)^2}{2} \biggr)
 + u^\varepsilon \cdot \frac{\partial  f(u^\varepsilon)}{\partial x}
= u^\varepsilon \cdot \tilde{s}(x,y ) + \varepsilon \frac{\partial^2 \phi( u^\varepsilon) }{\partial x^2} - \varepsilon \phi'' [\frac{ \partial u^\varepsilon}{\partial x}]^2  \;.

\end{array}
\end{eqnarray}
On the other hand, since $\omega_s \subseteq \tilde{D}_{R}(\xi^-,\xi^+) $, by following the characteristic curves, we can obtain
\begin{eqnarray}
\begin{array}{c}
\displaystyle
\frac{d}{dt}
\int_\mathbb{R} \frac{ (u^\varepsilon )^2}{2} dx + \varepsilon \int_\mathbb{R} ( \frac{ \partial u^\varepsilon}{\partial x})^2 dx
=
\int_\mathbb{R} u^\varepsilon \tilde{s}dx 
\end{array}
\end{eqnarray}
\begin{eqnarray}
\begin{array}{c}
\displaystyle
\frac{d}{dt}
   \int_\mathbb{R} \frac{ (u^\varepsilon )^2}{2} dx
 + \varepsilon \int_\mathbb{R} ( \frac{ \partial u^\varepsilon}{\partial x})^2 dx

 + \int_\mathbb{R} |u^\varepsilon| |\tilde{s}(x,t) |dx 
 
\leq 
\\
\displaystyle

   \int_\mathbb{R} |u^\varepsilon| |\tilde{s}(x,t) | dx  
 + \int_\mathbb{R} |u^\varepsilon| |\tilde{s}(x,t)  |dx 

\leq
\\ 
  \displaystyle

   2 \int_\mathbb{R} |u^\varepsilon| \cdot  |\tilde{s}(x,t) |   dx \;.

\end{array}
\end{eqnarray}
By the H\"older inequality we obtain
\begin{eqnarray}
\begin{array}{c}
\displaystyle

  \int_\mathbb{R} |u^\varepsilon| \cdot  |\tilde{s}(x,t) | dx 

\leq
\displaystyle
  \int_\mathbb{R} \frac{|u^\varepsilon|^2}{2} dx +  M \cdot \frac{K_s^2}{2} \;,

\end{array}
\end{eqnarray}
with $ M = \int_{\tilde{D}_{K_s}(\xi^-, \xi^+)} dx = \frac{ T}{2} ( 2( \xi^+ - \xi^-) + T(C_p^\xi - C_m^\xi) )$.
Then by Gronwall's inequality
\begin{eqnarray}
\label{eq:bound-1-teo-1}
\begin{array}{c}
\displaystyle
   \int_\mathbb{R} \frac{ (u^\varepsilon )^2}{2} dx
 + \varepsilon \int_0^t\int_\mathbb{R} \biggl( \frac{ \partial u^\varepsilon}{\partial x} \biggr)^2 dx ds

 + \int_0^t \int_\mathbb{R} |u^\varepsilon| |\tilde{s}(x,t) |dx ds 
 
\leq  

K(T) \;,
\end{array}
\end{eqnarray}
in $[0,T]$. This establishes a uniform $L^2$ bound for the sequence $\{u^\varepsilon \}$.  On the other hand, any entropy $\phi \in C^2(\mathbb{R}) $ having a compact support $\omega$, for the equation (\ref{eq:model-scaling}) satisfies 
\begin{eqnarray}
\label{eq:gen-entropy}
\begin{array}{c}

\partial_t \phi( u^\varepsilon) + \partial_x \psi( u^\varepsilon) 
= 
 \phi' \tilde{s} + \varepsilon \partial_{xx} (\phi( u^\varepsilon)) 
- \varepsilon \phi''(u^\varepsilon) \cdot (\partial_x u^\varepsilon )^2 \;,    

\end{array}
\end{eqnarray}
where $\psi' = \phi' \cdot f'$.  Since $\phi$ has a compact support, then $ \phi'$ is bounden within  $\omega$, let say by a constant $a$. So
\begin{eqnarray}
\label{eq:bound-2-teo-1}
\begin{array}{c}
\displaystyle
\int_0^T \int_\mathbb{R} 
| 
\phi'(u^\varepsilon) \cdot \tilde{s}(x,t) 
|
dx dt
\leq 
 a ||\tilde{s}||_1 = a M \cdot K_s
 \;.

\end{array}
\end{eqnarray}
Therefore, by combining (\ref{eq:bound-1-teo-1}), (\ref{eq:gen-entropy}) and (\ref{eq:bound-2-teo-1}) we note that $\partial_t \phi( u^\varepsilon) + \partial_x \psi( u^\varepsilon) $ is bounded in  $\omega$, and then this lies in a compact set of $H^{-1}_{loc}( [0,T] \times \mathbb{R})$. Therefore, from the Theorem 3.2 and Corollary 3.2 in \cite{Schonbek:1982a}, there exists a subsequence of $ \{ u^\varepsilon \}$ still denoted by $\{ u^\varepsilon \}$, which converges weak to some $\bar{u}$, furthermore,  since $f$ it is strictly convex,  from Theorem 2.1 in \cite{Dias:2002a}, $u^\varepsilon$ converges strong to $\bar{u}$. Furthermore, $ f(u^\varepsilon) $ converges weak to $f(\bar{u})$, that is
\begin{eqnarray}
\begin{array}{c}
\displaystyle
\int_0^T \int_\mathbb{R} f(u^\varepsilon) \phi dxdt \rightarrow \int_0^T \int_\mathbb{R} f( \bar{u}) \phi dxdt,
\end{array}
\end{eqnarray}
for all $ \phi \in C^\infty_c([0,T] \times \mathbb{R} ) $. Since the source term, does no depend on the state $u^\varepsilon$, using a standard diagonalization procedure, \cite{Attouch:2014a}, the result holds.
Furthermore, from (\ref{eq:bound-1-teo-1}) the solution is bounded, in $L^2$, for each $t \in [0,T]$ and thus the solution is also bounded in $ \tilde{D}_{K_s}(\xi^-,\xi^+)$.

\end{proof}

Since the source term is not coupled with the conservation law, this may generate degeneracy in the sense that infinitely many asymptotic states may be possible when $t\rightarrow \infty$ \cite{Dafermos:2015a}. However, in the local in time case, a unique solution exists if the source term has a support which lies in $\tilde{D}_{K_s}(\xi^-,\xi^+)$.  As we shall see later, it is enough to guarantee, in particular cases, the existence of the solution to the original equation (\ref{eq:1}).

The next step is to prove conditions on $s(u)$ such that a subsequence of $ \{ w^n\}$ still called $ \{ w^n\}$, with $w^n =v^n \cdot \beta_n $ where $v^n$ is the exact solution to (\ref{eq:system-for-n+1}), is convergent to an exact solution of (\ref{eq:1}). Indeed, we assume the following:
\begin{itemize}

\item {\bf H1}: $s : \mathbb{R} \rightarrow \mathbb{R}$ is locally Lipschitz continuous and $s(0) = 0$.    

\item {\bf H2}: $s \in C^{2}(\mathbb{R}) \cap H^1(\mathbb{R})$.
\end{itemize}
\begin{theorem}\label{theo-2} 

Let $f:\mathbb{R} \rightarrow \mathbb{R}$ be a function satisfying (\ref{eq:prop-f-1}) and (\ref{eq:prop-f-2}) and  let $ s : \mathbb{R} \rightarrow \mathbb{R}$  be a function satisfying {\bf H1} and {\bf H2}. If $supp(u_0)$ is bounded. Then the sequence, $\{ w^{n+1}\}$ with  $w^{n+1} = \beta_{n+1} \cdot v^{n+1}$, where $v^{n+1}$ is a solution of (\ref{eq:system-for-n+1}) converges to  a weak solution $\bar{w} \in L^1_{loc}$ of (\ref{eq:1}).

\end{theorem}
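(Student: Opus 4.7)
The plan is to run a Picard-style iteration on the map $w^n \mapsto w^{n+1}$ defined by (\ref{eq:aux-equation}), invoking Lemma \ref{theo-1} to solve each step and using Kruzkov's $L^1$-contraction for scalar conservation laws to turn the local Lipschitz property of $s$ into a Cauchy estimate that forces the whole sequence (not just a subsequence) to converge to a weak solution of (\ref{eq:1}). Throughout, I would identify $w^{n+1} = \beta_{n+1}v^{n+1}$ with the function $w$ produced in Step 2, so the iteration reads $\partial_t w^{n+1} + \partial_x f(w^{n+1}) = s(w^n)$ with $w^{n+1}(x,0) = u_0(x)$.

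First I would set up the induction showing that each $w^n$ exists as a bounded entropy solution on a single cone $\tilde{D}_{K_s}(\xi^-,\xi^+)$ independent of $n$. Hypothesis H2 places $s \in H^1(\mathbb{R}) \cap C^2(\mathbb{R})$, whose Sobolev embedding into $C_0(\mathbb{R})$ provides a uniform bound $K_s := \|s\|_{L^\infty}$. Choosing $[\xi^-,\xi^+] \supseteq \mathrm{supp}(u_0)$, and using finite propagation speed together with $s(0) = 0$ (so $s(w^n)$ vanishes wherever $w^n$ does), I would inductively verify $\mathrm{supp}\,s(w^n) \subseteq \tilde{D}_{K_s}(\xi^-,\xi^+)$. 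Lemma \ref{theo-1} applied with $\tilde{s} = s(w^n)$ then produces $w^{n+1}$ bounded in $L^\infty \cap L^2$ by constants independent of $n$ (the scalar maximum principle yields $\|w^{n+1}(t)\|_{L^\infty} \le \|u_0\|_{L^\infty} + T K_s$, and (\ref{eq:bound-1-teo-1}) gives the $L^2$ bound).

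Since the iterates remain in a fixed bounded range, H1 provides a single Lipschitz constant $L$ for $s$ on that range. Both $w^{n+1}$ and $w^n$ are Kruzkov-entropy solutions obtained by vanishing viscosity and share the initial datum $u_0$, so the classical $L^1$ stability for scalar conservation laws with integrable sources gives
\begin{equation*}
E_n(t) := \|w^{n+1}(t) - w^n(t)\|_{L^1(\mathbb{R})} \le L \int_0^t E_{n-1}(s)\, ds, \qquad n \ge 1,
\end{equation*}
with $E_n(0) = 0$. Iterating yields $E_n(t) \le (Lt)^n/n! \cdot \sup_{[0,T]} E_0$, so $\sum_n \|w^{n+1} - w^n\|_{C([0,T];L^1)}$ is finite and $\{w^n\}$ converges in $C([0,T]; L^1(\mathbb{R}))$ to some $\bar{w}$.

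Interpolating the $L^1$ convergence against the uniform $L^\infty$ bound yields $w^n \to \bar{w}$ in $L^p_{loc}$ for every finite $p$ and, along a subsequence, pointwise a.e. Continuity of $f$ and $s$ combined with dominated convergence then let me pass to the limit in the distributional identity
\begin{equation*}
\int_0^T\!\!\int_\mathbb{R} \bigl(w^{n+1}\partial_t\varphi + f(w^{n+1})\partial_x\varphi + s(w^n)\varphi\bigr)\,dx\,dt + \int_\mathbb{R} u_0\,\varphi(\cdot,0)\,dx = 0,
\end{equation*}
identifying $\bar{w} \in L^1_{loc}$ as a weak solution of (\ref{eq:1}). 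The main obstacle I expect is the uniform-in-$n$ propagation bookkeeping needed to keep every $\mathrm{supp}\,s(w^n)$ inside one fixed cone $\tilde{D}_{K_s}(\xi^-,\xi^+)$: this cone depends on characteristic speeds evaluated on $u_0$ and on the a priori $L^\infty$ growth $R$, and choosing $\xi^\pm$, $R$ and $T$ consistently so that the induction actually closes is the delicate part of the argument; once that is in place, the Picard/Kruzkov loop and the limit passage are essentially routine.
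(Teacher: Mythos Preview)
Your proposal is correct and takes a genuinely different route from the paper.

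The paper obtains the Cauchy property by working along a characteristic curve $x'(t)=\lambda(w^n)$ and tracking $\tfrac{1}{2}(w^n-w^m)^2$ there; after introducing mean-value points $\theta_s,\theta_f$ it rewrites the resulting ODE with an integrating factor $\Gamma(x,t)$ and uses $w^n(x,0)=w^m(x,0)$ to conclude that $\{w^n\}$ is a weak Cauchy sequence. Strong convergence of $f(w^n)$ then comes from the compensated-compactness argument underlying Lemma~\ref{theo-1}, and convergence of $s(w^n)$ is handled by Vitali's theorem.

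You replace all of this by the Kruzkov $L^1$-stability estimate for entropy solutions with different right-hand sides, which immediately yields the Picard bound $E_n(t)\le L\int_0^t E_{n-1}(\tau)\,d\tau$ and hence $(LT)^n/n!$ decay. This is cleaner and more robust: it gives a quantitative rate, it proves convergence of the \emph{entire} sequence in $C([0,T];L^1)$ rather than a subsequence, and the limit passage in the weak formulation needs only dominated convergence. The paper's characteristic computation is by contrast somewhat formal (the curve is a characteristic for $w^n$ but not for $w^m$, and the logarithmic manipulations are delicate where $w^n=w^m$). Both proofs share exactly the step you flagged as the main obstacle---closing the induction that keeps every $\mathrm{supp}\,s(w^n)$ inside a single $n$-independent cone $\tilde D_{K_s}(\xi^-,\xi^+)$---and the paper handles that step in essentially the same way you outline.
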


\begin{proof}

Let us define $  supp(u_0) = [\xi^-, \xi^+]$. From  hypothesis {\bf H1} and {\bf H2}, in any open set $\Omega$ such that $0 \in \Omega$ there exist constant values $ L_s$ and $C_s$ such that 
$$ | s(u_1) - s(u_2) | \leq L |u_1 - u_2 |$$ and $$ |s(u_1)| < C_s \;, |s'(u_1) | < C_s \;,$$
for each $u_1, u_2\in \Omega$.  

Therefore if $w^{0}(x,t) $ is any bounded function in $ (x,t)\in \tilde{D}_{C_s}(\xi^-, \xi^+)$ with $w^{0}(x,t) \in \Omega$. Then $  supp( s(w^0) ) \subseteq \tilde{D}_{C_s}(\xi^-, \xi^+)$. So, the conditions of Lemma \ref{theo-1} are satisfied.  So, a solution $w^1$ there exists and this is bounded in $ \tilde{D}_{C_s}(\xi^-, \xi^+)$.

By the same arguments,  we  obtain a subsequence, still called here $ \{ w^n \}$  which in virtue of Lemma \ref{theo-1} solves the equation
\begin{eqnarray}
\label{eq:model-scaling-aux}
\begin{array}{c}
\frac{\partial w^n}{\partial t} + \frac{\partial f(w^n)}{\partial x} = s( w^{n-1}) \;, t \in [0,T], x \in \mathbb{R } \;, \\
u^{n}(x,0) = u_0(x)\;
\end{array}
\end{eqnarray}
and $supp(s( w^{n-1}(x,t))) \subseteq \tilde{D}_{C_s}(\xi^-, \xi^+) $.  So, on each curve $x(t)$ defined by $ \frac{ dx}{dt} = \lambda( w^n(x,t))$ the function $w^n(x,t)$ satisfies
\begin{eqnarray}
\begin{array}{c}
\frac{d w^n(x,t)}{dt} = s(w^{n-1}(x,t)) \;,
\end{array}
\end{eqnarray}
thus, we obtain
\begin{eqnarray}
\label{eq:model-scaling-aux-1}
\begin{array}{c}
\frac{ d}{dt} \biggl[  \frac{ ( w^n(x,t) - w^{m}(x,t) )^2 }{2}\biggr] = ( w^n(x,t) - w^{m}(x,t) ) \cdot \biggl(

\partial_t (w^n(x,t) - w^m(x,t)) 
\\ + \frac{dx}{dt} \cdot \partial_x (w^n(x,t) - w^m(x,t) ) 
\biggr)
\;.
\end{array}
\end{eqnarray}
After some manipulations we obtain
\begin{eqnarray}
\label{eq:model-scaling-aux-2}
\begin{array}{c}

\frac{ d}{dt} \biggl[  \frac{ ( w^n(x,t) - w^{m}(x,t) )^2 }{2}\biggr] =
\biggl( w^n(x,t) - w^{m}(x,t) \biggr) \cdot 
\biggl(
s(w^{n-1}) - s(w^{m-1}) \\
 + ( \frac{ dx}{dt} - \lambda(w^n) ) \cdot \frac{\partial w^n}{\partial x} 
 - ( \frac{ dx}{dt} - \lambda(w^m) ) \cdot \frac{\partial w^m}{\partial x}
\biggr)
\;.
\end{array}
\end{eqnarray}
Since $s \in C^2( \mathbb{R})$ and $ f\in C^2(\mathbb{R})$, we assume that given $x$ and $t$, there exist $\theta_s(x,t)$ and $\theta_f(x,t)$ such that 
\begin{eqnarray}
\label{eq:theta-s}
\begin{array}{c}
\beta( \theta_s(x,t) ) \cdot ( w^{n-1}(x,t) - w^{m-1}(x,t) ) = s( w^{n-1}) - s(w^{m-1}) 
\end{array}
\end{eqnarray}
and
\begin{eqnarray}
\label{eq:theta-f}
\begin{array}{c}
\lambda(\theta_f(x,t)) \cdot ( w^{n}(x,t) - w^{m}(x,t) ) = f( w^{n}(x,t)) - f(w^{m}(x,t)) \;.
\end{array}
\end{eqnarray}
So, (\ref{eq:model-scaling-aux-2}) can be written as 
 \begin{eqnarray}
\label{eq:model-scaling-aux-3}
\begin{array}{c}
\frac{ d}{dt} \biggl[  \frac{ ( w^n - w^{m} )^2 }{2}\biggr] = 
\\
\frac{( w^n - w^{m} )^2}{2} \cdot \biggl\{

2 \beta(\theta_s)  \cdot \delta^{n,m} + \frac{ dx}{dt} \cdot \partial_x \ln\biggl( [ w^n - w^m]^2\biggr) \\
 - \lambda(\theta_f) \cdot \partial_x \ln\biggl( \lambda(\theta_f)^2 \cdot (w^n- w^m)^2 \biggr) 
\biggr\}
\;,
 \end{array}
\end{eqnarray}
where $\delta^{n,m} = \frac{ w^{n-1}-w^{m-1}}{w^n- w^m}$, $\beta = \frac{d s(u)}{dt}$ and $\lambda (u) = \frac{ df(u)}{du}$.  This yields
\begin{eqnarray}
\label{eq:model-scaling-aux-4}
\begin{array}{c}
\displaystyle
\frac{ d}{dt} 

\biggl[  \frac{ ( w^n - w^{m} )^2 }{2}
\cdot
exp\biggl( - \displaystyle \int_0^t \biggl\{
2 \beta(\theta_s)  \cdot \delta^{n,m} + \frac{ dx(\tau)}{dt} \cdot \partial_x \ln\biggl( [ w^n - w^m]^2\biggr) \\
 - \lambda(\theta_f) \cdot \partial_x \ln\biggl( \lambda(\theta_f)^2 \cdot (w^n- w^m)^2 \biggr) 
\biggr\} d \tau 
\biggr)  
\biggr]
= 0
\;.
 \end{array}
\end{eqnarray}
Thus 
\begin{eqnarray}
\label{eq:model-scaling-aux-5}
\begin{array}{c}
\displaystyle

\biggl[  \frac{ ( w^n(x,t) - w^{m}(x,t) )^2 }{2}\biggr] \cdot exp \biggl( - \displaystyle \int_0^t \biggl\{

2 \beta(\theta_s(x,\tau))  \cdot \delta^{n,m}(x,\tau) \\

+ \frac{ dx(\tau)}{dt} \cdot \partial_x \ln\biggl( [ w^n(x,\tau) - w^m(x,\tau)]^2\biggr) \\
 - \lambda(\theta_f(x,\tau)) \cdot \partial_x \ln\biggl( \lambda(\theta_f(x,\tau))^2 \cdot (w^n(x,\tau)- w^m(x,\tau))^2 \biggr) 
\biggr\} d \tau \biggr)  
\\
= 
\displaystyle
 \frac{ ( w^n(x,0) - w^{m}(x,0))^2 }{2}\;.

 \end{array}
\end{eqnarray}
By the sake of simplicity, let us introduce
\begin{eqnarray}
\begin{array}{c}
\Gamma(x, t) = 
- \displaystyle \int_0^t \biggl\{

2 \beta(\theta_s)  \cdot \delta^{n,m} 

\displaystyle
+ \frac{ dx(\tau)}{dt} \cdot \partial_x \ln\biggl( [ w^n - w^m]^2\biggr) 
\\
 - \lambda(\theta_f) \cdot \partial_x \ln\biggl( \lambda(\theta_f)^2 \cdot (w^n- w^m)^2 \biggr) 
\biggr\} d \tau \;.
\end{array}
\end{eqnarray}
Since $w^n$ and $w^m$ are the limiting solution ($\varepsilon \rightarrow 0 $) of problems as (\ref{eq:model-scaling}) with $\tilde{s}(x,t)$ given by $  s( w^{n-1}(x,t))$ and $ s( w^{n-1}(x,t))$, respectively, where,  each $ w^n$ satisfies the entropy condition $ \partial_t ( (w^n)^2 / 2) + \partial_x ( w^n f(w^n) ) = w^n s(w^{n-1})$, then we assume that $( w^n - w^m)^2$ is bounded.  This guarantees that for all $\psi \in  C^{2} (\mathbb{R})$ we obtain that
\begin{eqnarray}
\label{eq:model-scaling-aux-6}
\begin{array}{c}
\displaystyle \int_\mathbb{R } 
\biggl[  \frac{ ( w^n(x,t) - w^{m}(x,t) )^2 }{2}\biggr] \cdot \displaystyle e^{  - \Gamma(x,t)  } \cdot \psi(x) dx 

  =
\\
\displaystyle  
    \int_\mathbb{R } \frac{ ( w^n(x,0) - w^{m}(x,0))^2 }{2} \cdot \psi(x) dx\;,
 \end{array}
\end{eqnarray}
tends weak to zero.   Hence, we deduce that $ \{w^n\}$ is a weak Cauchy-sequence therefore it is a weak convergent sequence to $\bar{w}$.  By the same arguments in Lemma \ref{theo-1}, we have $ f(w^n) \rightarrow f( \bar{w} )$. It is remaining to prove that $s(w^n) \rightarrow s(\bar{w})$.

Since, $f$ is convex $w^n \rightarrow \bar{w}$ strong and from (\ref{eq:theta-s}) and (\ref{eq:theta-f}) we argue that $s(w^n(x,t))$ converges punctually to $s(\bar{w}(x,t))$. Furthermore, from (\ref{eq:bound-1-teo-1}) we deduce that for any measurable set $E$ with a finite measure $E$ and any disk $D(0, R) = \{ v : |v|<R\}$, we have that
\begin{eqnarray}
\begin{array}{c}
\displaystyle
\int_0^T \int_{E} | s( w^n(x,t))| dx dt \leq \int_0^T \int_{ E \cap D(0, R)} | s( w^n) | dx dt + \\
\int_0^T \int_{\mathbb{R} \setminus D(0, R)} \frac{ |w^n (x,t)|}{R} \cdot | s( w^n)| dx dt  \leq  T( mean(E) \cdot || s||_1  + \frac{K(T)}{R} )\;.
\end{array}
\end{eqnarray}
So, $\{s(w^n) \}$ is uniformly bounded in any measurable $E$ having a finite measure. So, from Vitali's theorem, ( for further information see Chapter 2 in \cite{Rudin:1987a}), $s(\bar{w})$ is integrable and
\begin{eqnarray}
\begin{array}{c}
\displaystyle
\int_0^T \int_{E} s(w^n(x,t))dxdt \rightarrow \int_0^T \int_{E} s(\bar{w}(x,t))dxdt  \;.
\end{array}
\end{eqnarray}
Thus, we deduce that the convergence is in $L^1_{loc}(\mathbb{R})$. Therefore
\begin{eqnarray}
\begin{array}{c}
\displaystyle
\int_0^T \int_{\mathbb{R}} s(w^n(x,t))\phi dxdt \rightarrow \int_0^T \int_{\mathbb{R}} s(\bar{w}(x,t))\phi dxdt  \;,
\end{array}
\end{eqnarray}
for all $\phi \in C^\infty_c(\mathbb{R})$ and so the result holds.

\end{proof}

Notice that the existence and uniqueness result in this section means also the convergence of the iterative process given by {\bf Step 1}, {\bf Step 2} and {\bf Step 3}, introduced in the section  \ref{sec:scaling-functions}. It is implicit in the demonstration that the solution in {\bf Step 2} has to be an entropy satisfying one.
In the next section we are going to implement this procedure on a set of well known balance laws, where the source terms satisfy the conditions of the theorem \ref{theo-2}.

\section{Numerical experiments}\label{sec:experiments}

Here, we are going to implement the iterative process described in the section \ref{sec:scaling-functions} by using the conventional one-step finite volume formula, in conserved form given by
\begin{eqnarray}
\label{eq:one-step-fv-formula}
\begin{array}{c}
q_i^{n+1} = q_i^{n} - \frac{ \Delta t}{\Delta x} \cdot ( f_{i+\frac{1}{2}} - f_{i-\frac{1}{2}} ) + \Delta t \cdot S_i \;,
\end{array}
\end{eqnarray}
where $q_{i}^n$ is the cell average of the solution $q(x,t)$ in the space-time interval $[t^n, t^{n+1}]\times [x_{i-\frac{1}{2}}, x_{i+\frac{1}{2}}]$, the expression $ f_{i+\frac{1}{2}} $ represents a numerical flux and $S_i$ is the source term. We use a CFL type condition to obtain the time step, so this is a first order method in both space and time. Here we only limit to first order. However, experiments not shown here have evidenced that the procedure can also be applied by using second order scheme. In general any high-order method can be implemented, but we need to take care about the applicability of the present approach.

Notice that, (\ref{eq:one-step-fv-formula}) can be implemented for both the original equation (\ref{eq:1}) and the scaled approach as well. Of course, to solve directly (\ref{eq:1}) we use $S_i = s(q_{i}^n)$ and denote this solution as $q^R$. For solving through the iterative approach and thus via the solution of (\ref{eq:aux-equation}) we use $ S_i = \tilde{s}(x_i, t^{n}) $ for given space-time dependent functions $\tilde{s} $. 

The reference solution, when applied, is obtained with a second-order MUSCL-HANCOCK scheme using a fine mesh (1000 cells). For numerical implementations we use a fixed number of cells and time steps as well. So, in order to guarantee stable schemes for any source term $\tilde{s}$, we use small CFL coefficients. Despite there is a large number of schemes able to solve source terms with large time steps, we insist into using a simple method because of we are interested on the ability of the present approach to generate approximations as simple as possible. 
Here, we implement the new low-dissipation centred scheme, \cite{Toro:2020a} named FORCE-$\alpha$, which works very well with low values of CFL coefficients without penalizing the suitable amount of numerical dissipation. The flux function has the form
\begin{eqnarray}
\begin{array}{c}
f_{i+\frac{1}{2}}^{\alpha} = \frac{1}{2} ( f_{i+\frac{1}{2}}^{LW,\alpha} + f_{i+\frac{1}{2}}^{LF, \alpha})\,,
\end{array}
\end{eqnarray}
where $f^{LF,\alpha}_{i+\frac{1}{2}}$ and $f^{LW, \alpha}_{i+\frac{1}{2}}$ are the numerical fluxes of Lax-Friedrich and Lax-Wendroff, respectively, which are given by

\begin{eqnarray}
\begin{array}{l}
\displaystyle
f^{LF,\alpha}_{i+\frac{1}{2}}:= \frac{1}{2}( f(q_{i+1}^n) + f(q_{i}^n) ) - \frac{ 1}{ 2} \frac{ \Delta t}{\alpha \Delta x} ( q_{i+1} - q_i)  \,,\\
\\
\displaystyle
f^{LW, \alpha}_{i+\frac{1}{2}} := f( q_{i+\frac{1}{2}}^{LW,\alpha})  \;, \\

\displaystyle
q_{i+\frac{1}{2}}^{LW,\alpha} := \frac{1}{2}( q_{i+1}^n + q_{i}^n ) - \frac{ 1}{ 2} \frac{ \alpha \Delta x }{\Delta t} (f(q_{i+1}^n) + f( q_{i}^n) ) \;,
\end{array}
\end{eqnarray}
here, $\alpha$ is a suitable constant value which is involved in the numerical dissipation of the scheme. The interested readers may consult \cite{Toro:2020a} for further details.
Regarding the implementation, we first chose the maximum CFL coefficient, $c_{max}$ for which the FORCE scheme ($\alpha = 1$), \cite{Toro:1999a}, applied to the original hyperbolic balance law (\ref{eq:1}) is stable and then compute the parameter $\alpha$ required for the scheme (Algorithm C in \cite{Toro:2020a}) and the maximum range of CFL coefficients for which the numerical method depicts a viscosity which is comparable to such of the Godunov scheme, the most accurate first order monotone scheme with the minimal numerical viscosity. Despite this scheme is originally proposed for conservation laws, numerical experiments show that it can be also applied in the context of hyperbolic balance laws.

In order to assess the performance of the present methodology we compute the error with respect to the reference solution $\bar{u}$, for both type of solutions, $q^R$ and that obtained at each stage $k$ of the iterative approach $w^k$, denoted here by $ Err^R = || \bar{u}(\cdot, T) - q^R(\cdot, T) ||_{L_1} $ and $Err^k = || \bar{u}(\cdot, T) - w^k(\cdot, T) ||_{L_1} $, respectively. To assess the gaining in the accuracy incurred by the present approach we are going to compute the gaining coefficient given by $ \tau^k = \frac{ Err^k}{ Err^R}$.
If procedure is a feasible one in terms of accuracy, we expect the coefficient to be closed to $1$.

In numerical implementations, $w^k$, is approximated on cells $[t^n,t^{n+1}] \times [x_{i-\frac{1}{3}}, x_{i+\frac{1}{2}}]$, so $||w^k||$ used in {\bf Step 2} of section \ref{sec:scaling-functions}, will be the maximum value that the function $|w^k(x,t)|$ reaches at each time step $t^n$ and at cell center $x_i = \frac{ x_{i-\frac{1}{2}}+ x_{i+\frac{1}{2} } }{2}$ of  $[x_{i-\frac{1}{3}}, x_{i+\frac{1}{2}}]$, since it is a first order approximation, this corresponds to the cell average of the data.  So, $\beta_n$ provides a measure of the solution in both space and time.

\section{The linear advection-reaction equation}
Let us consider the advection-reaction equation
\begin{eqnarray}
\begin{array}{c}
\partial_t q + \partial_x( \lambda q ) = r q\;, x \in [0, 1],\; t \in[0, T]\;, \\

q(x,0) = exp( - 100 (x- \frac{1}{2})^2)\;.
 
\end{array}
\end{eqnarray}
Notice that this source term satisfies that $s(q)=r q$ is locally bounded in $\mathbb{R}$ and $s(0) = 0$. The source terms is also locally Lipschitz continuous, furthermore, this belongs to the class of $C^\infty(\mathbb{R})$. We can assume that $u_0(x)$ has bounded support in $[0,1]$, we implement periodic boundary conditions and hence we can assume that $ supp(u_0)\subset supp(s)$  is bounded. Therefore, $s\in H^1(\mathbb{R})$. That is, conditions in the Theorem \ref{theo-2} are satisfied, hence the iterative procedure provides the solution of this balance law. Here, we use the model parameters $r = -10$, $\lambda = 1$. The implementation is carried out with, $CFL = 0.18$, $\alpha = 5.6$, $100$ cells and $Tol = 10^{-7}$. Figure \ref{figu:lin-adv-reac-comp}, shows the functions $w^0,$ $w^1$, $w^2$ and $w^4$ at the final time $T = 0.25$ generated by the iterative process beginning with $v^0 = 0$ and $\beta_0 = 1$. This depicts a clear tendency to the exact solution. Table \ref{table:linear-advection-reaction} shows the results for the convergence of the iterative process for the linear advection-reaction case. The second column, shows the sequence $\{ \beta_n \}$, which in some sense provides, a measure of the function $w^n$ in space and time. The third column shows the $L^1$ error at the output time  between each solution $w^n$ and the exact solution. The fourth column shows the gaining factor, $\tau^n$. We observe that the converged solution $w^{15}$ has the same performance as the conventional first order approximation. Here, the convergence is in terms of the tolerance $Tol$. Figure \ref{figu:lin-adv-reac-comp-FirstOrder}, shows the converged solution, $w^{15}$ (circles), the approximate  solution obtained by the first order approximation $q^R$ (squares) and the exact solution (continuous line). We see that $w^{15}$ and $q^R$ are identical. 
\begin{figure}
\begin{center}
\includegraphics[scale=0.5]{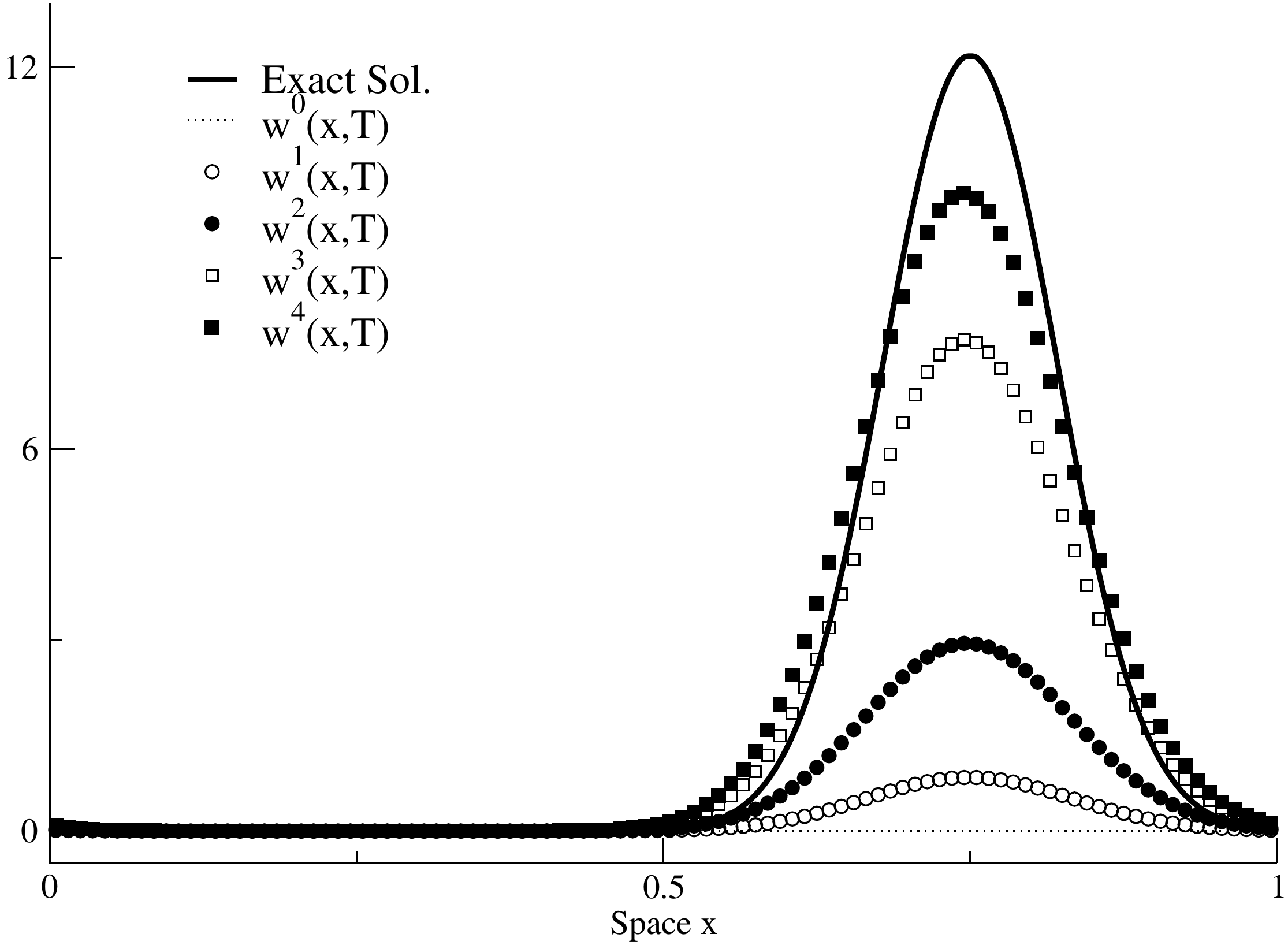}
\end{center}
\caption{Linear advection-reaction equation. Comparison of solutions $w^0$, $w^1$, $w^2$ and $w^4$ at $T =0.25$, against the exact solution for $\lambda=1$, $r = 10$, $100$ cells, $CFL = 0.18$ and $\alpha = 5.6$.  }\label{figu:lin-adv-reac-comp}
\end{figure}
\begin{figure}
\begin{center}
\includegraphics[scale=0.5]{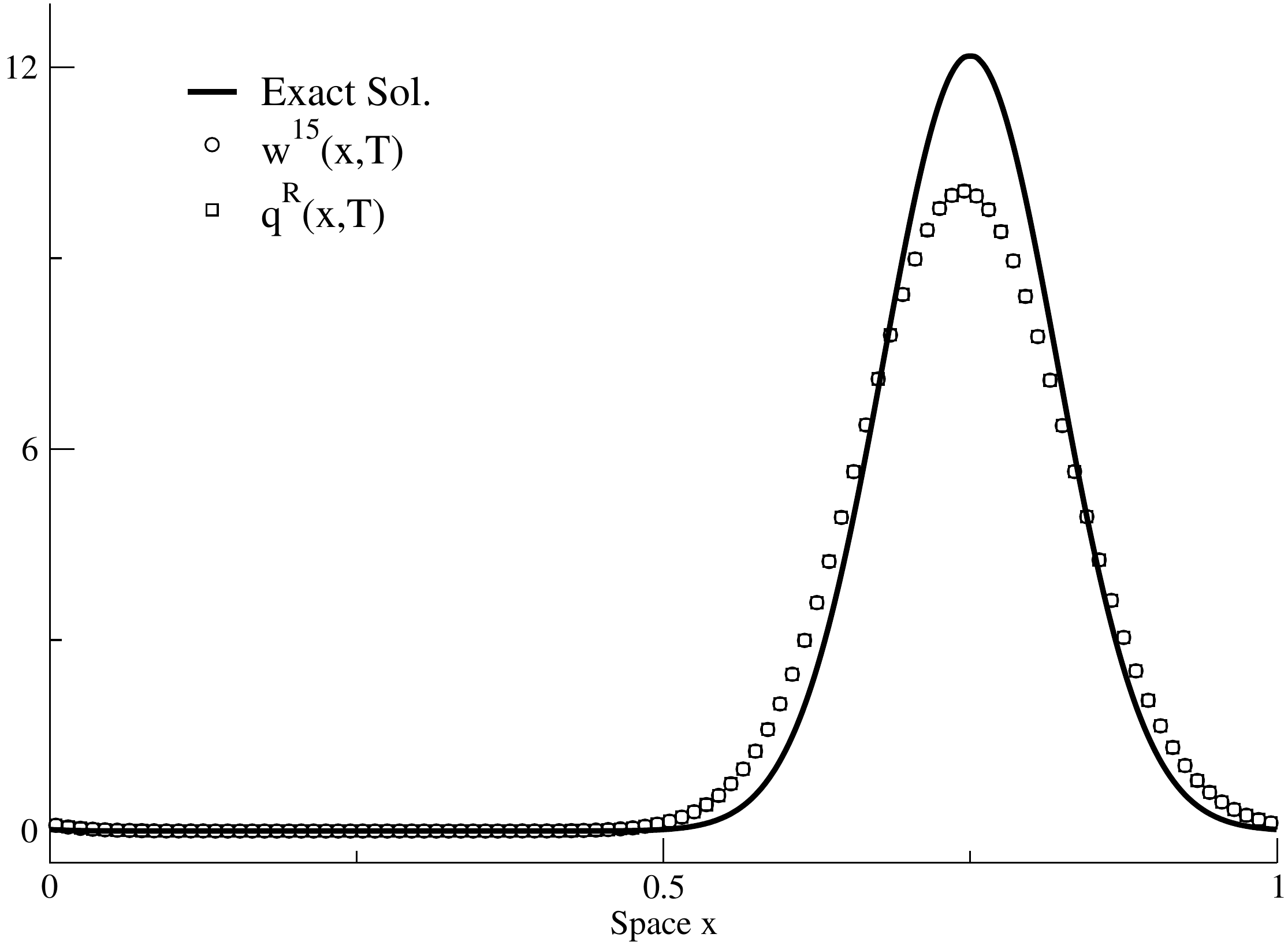}
\end{center}
\caption{Linear advection-reaction equation. Comparison of converged solution $w^{15}$, approximated firt order $q^R$ and the exact solution at $T =0.25$ for $\lambda=1$, $r = 10$, $100$ cells, $CFL = 0.18$, $\alpha = 5.6$ and $Tol = 10^{-7}$. }\label{figu:lin-adv-reac-comp-FirstOrder}
\end{figure}

\begin{table}
\begin{center}
\begin{tabular}{|c|c|c|c|}
\hline
$n$ & $\beta_n $ & $ Err^n$ & $\tau^n $\\
\hline
\hline

$  1 $ &  $ 1.002503184 $ &  $1.982060143 $ &  $ 0.192569 $ \\
$  2 $ &  $ 0.339221162 $ &  $1.541742664 $ &  $ 0.247567 $ \\
$  3 $ &  $ 0.179658402 $ &  $1.019974552 $ &  $ 0.374210 $ \\
$  4 $ &  $ 0.129557512 $ &  $0.649010090 $ &  $ 0.588102 $ \\
$  5 $ &  $ 0.110674919 $ &  $0.473993487 $ &  $ 0.805252 $ \\
$  6 $ &  $ 0.103359357 $ &  $0.410759385 $ &  $ 0.929216 $ \\
$  7 $ &  $ 0.100687214 $ &  $0.390106220 $ &  $ 0.978411 $ \\
$  8 $ &  $ 0.099806263 $ &  $0.384018730 $ &  $ 0.993921 $ \\
$  9 $ &  $ 0.099548143 $ &  $0.382245425 $ &  $ 0.998532 $ \\
$  10 $ &  $ 0.099480915 $ &  $0.381798718 $ &  $ 0.999700 $ \\
$  11 $ &  $ 0.099465245 $ &  $0.381707333 $ &  $ 0.999940 $ \\
$  12 $ &  $ 0.099461950 $ &  $0.381688495 $ &  $ 0.999989 $ \\
$  13 $ &  $ 0.099461320 $ &  $0.381684971 $ &  $ 0.999998 $ \\
$  14 $ &  $ 0.099461210 $ &  $0.381684368 $ &  $ 1.000000 $ \\
$  15 $ &  $ 0.099461192 $ &  $0.381684274 $ &  $ 1.000000 $ \\

\hline
\end{tabular}
\end{center}
\caption{Linear advection-reaction equation. Second column: parameter $\beta_n$. Third column: Error of $w^k$ with respect to the exact solution. Fourth column: gaining in accuracy in the $k$ stage. 
Parameters: $T =0.25$, $\lambda=1$, $r = 10$, $100$ cells, $CFL = 0.18$, $\alpha = 5.6$ and $Tol = 10^{-7}$.}\label{table:linear-advection-reaction}
\end{table}

\section{The Burger equation}
Let us consider the Burger equation with the non-linear source term
\begin{eqnarray}
\begin{array}{c}
\partial_t q + \partial_x( \frac{q^2}{  2} ) = q^4\;, x \in [0, 1],\; t \in[0, T]\;, \\

q(x,0) = \sin(2 \pi x)^4 \;,
 
\end{array}
\end{eqnarray}
endowed with periodic boundary conditions. Notice that this source as in the previous test, satisfies that $s(q)=q^4$ is locally bounded in $\mathbb{R}$ and $s(0) = 0$. The source terms is also locally Lipschitz continuous, furthermore, this belongs to $C^\infty(\mathbb{R})$, the class of infinitely continuous differentiable functions. We can assume that $u_0(x)$ has bounded support in $[0,1]$ and since periodic boundary conditions is implemented we have that $ supp(u_0)\subset supp(s)$  is bounded. Therefore, $s\in H^1(\mathbb{R})$. That is, this problem  does satisfy the conditions of the theorem \ref{theo-2}, hence the iterative procedure should provide the solution of this balance law. 

The implementation is carried out with, $CFL = 0.5$, $\alpha = 2.55$, $100$ cells and $Tol = 10^{-7}$. Figure \ref{fig:burger-u-to-power-4}, shows the functions $w^0,$ $w^1$, $w^2$ and $w^4$ at the final time $T = 0.12$ generated by the iterative process beginning with $v^0 = 0$ and $\beta_0 = 1$. 

Table \ref{table:burguer-u-to-power-4}, shows the results for the convergence of the iterative process. In the second column, it is shown the sequence $\{ \beta_n \}$. The third column shows the $L^1$ error at the output time between each solution $w^n$ and the reference solution. The fourth column shows the gaining factor, $\tau^n$. We observe that the converged solution $w^9$ has the same performance as the conventional first order approximation. Here, again the convergence is in terms of the tolerance $Tol$.  Figure \ref{fig:burger-u-to-power-4-comparison}, shows the converged solution, $w^9$ (circles) the approximate  solution obtained by the first order approximation $q^R$ (squares) and the exact solution (continuous line). We see that $w^{8}$ and $q^R$ are identical.
\begin{table}
\begin{center}
\begin{tabular}{|c|c|c|c|}

\hline
$n$ & $\beta_n $ & $ Err^n$ & $\tau^n $\\
\hline
\hline

 $ 1 $ &  $ 1.002089090 $ &  $0.049488757 $ &  $ 0.561212 $ \\
 $ 2 $ &  $ 0.951721597 $ &  $0.031257750 $ &  $ 0.888537 $ \\
 $ 3 $ &  $ 0.932017487 $ &  $0.028248110 $ &  $ 0.983204 $ \\
 $ 4 $ &  $ 0.928935042 $ &  $0.027821698 $ &  $ 0.998273 $ \\
 $ 5 $ &  $ 0.928586351 $ &  $0.027777618 $ &  $ 0.999858 $ \\
 $ 6 $ &  $ 0.928557279 $ &  $0.027773939 $ &  $ 0.999990 $ \\
 $ 7 $ &  $ 0.928555437 $ &  $0.027773699 $ &  $ 0.999999 $ \\
 $ 8 $ &  $ 0.928555346 $ &  $0.027773687 $ &  $ 0.999999 $ \\
 $ 9 $ &  $ 0.928555346 $ &  $0.027773687 $ &  $ 1 $ \\

\hline
\end{tabular}
\end{center}
\caption{Burger equation: Second column: parameter $\beta_n$. Third column: Error of $w^k$ with respect to the reference solution. Fourth column: gaining in accuracy in the $k$ stage. 
Parameters: $T =0.12$, $100$ cells, $CFL = 0.5$, $\alpha = 2.55$ and $Tol = 10^{-7}$.}\label{table:burguer-u-to-power-4}
\end{table}

\begin{figure}
\begin{center}
\includegraphics[scale=0.5]{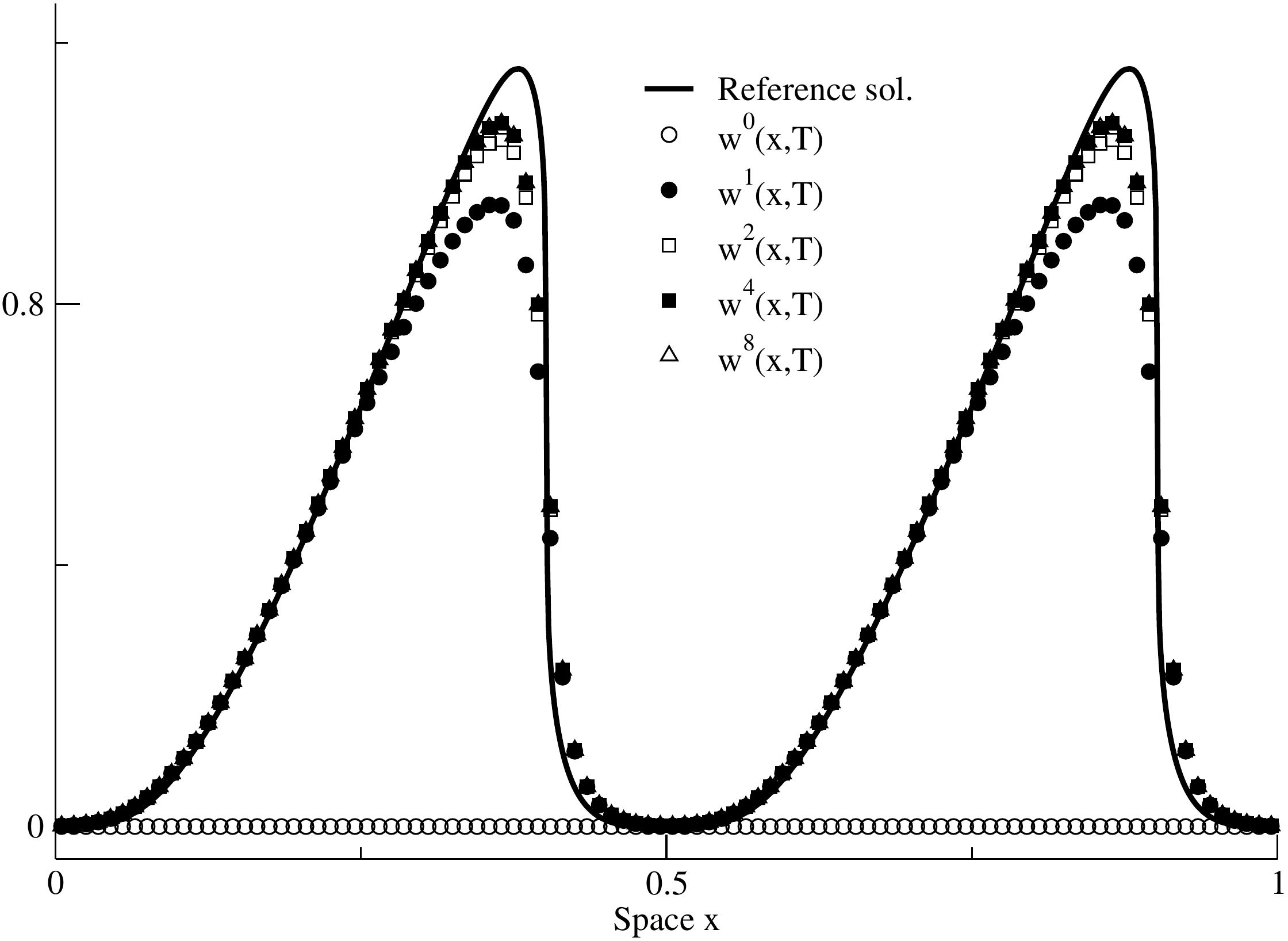}
\caption{Burger's equation. Comparison of solutions $w^0$, $w^1$, $w^2$, $w^4$ and $w^8$ at $T =0.12$, against the reference solution for  $100$ cells, $CFL = 0.5$ and $\alpha = 2.55$. }
\label{fig:burger-u-to-power-4}
\end{center}
\end{figure}

\begin{figure}
\begin{center}
\includegraphics[scale=0.5]{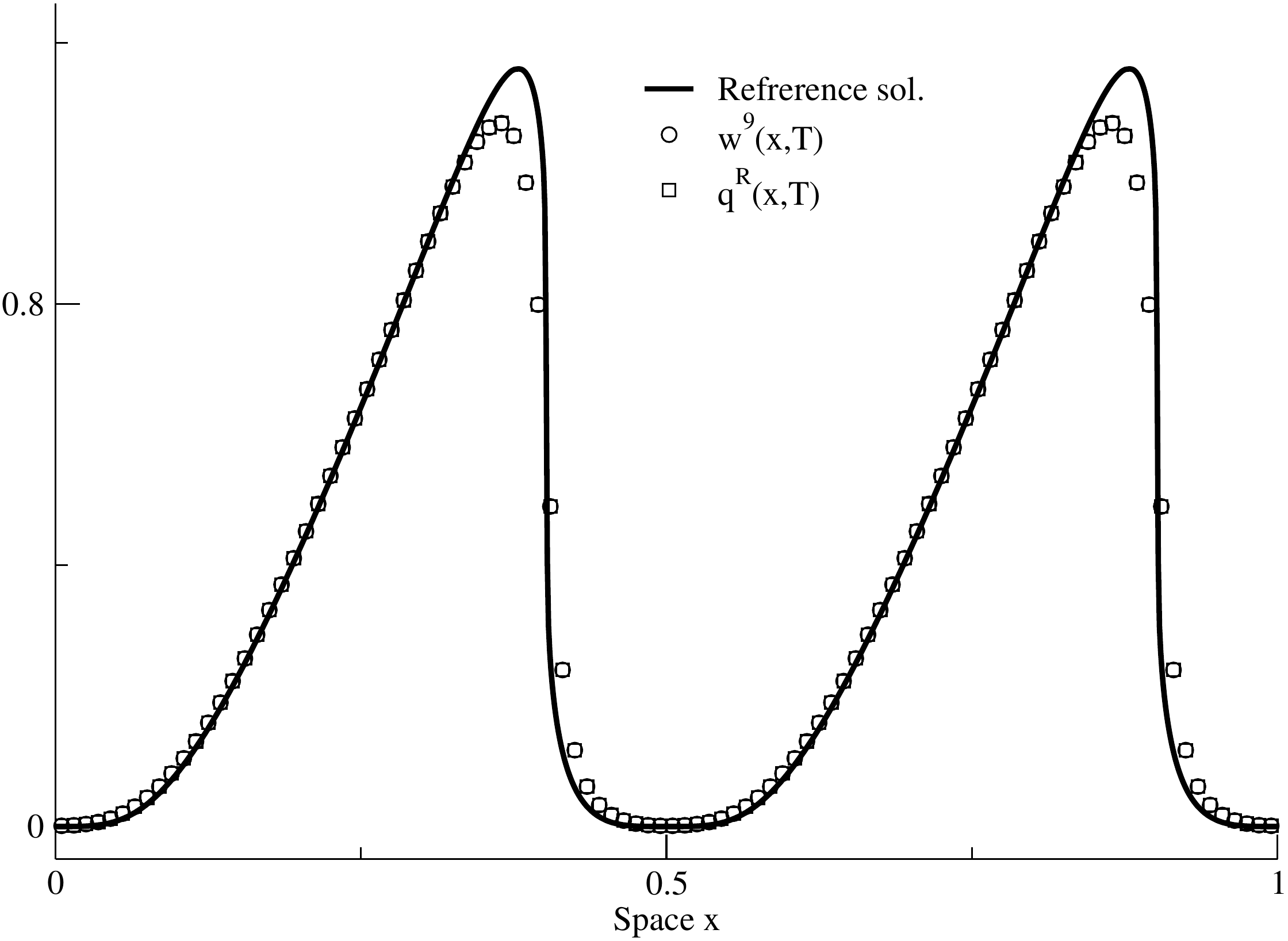}
\end{center}
\caption{Burger's equation. 
Comparison of converged solution $w^{9}$, approximated first order $q^R$ and the reference solution at $T =0.12$ for $100$ cells, $CFL = 0.5$, $\alpha = 2.55$ and $Tol = 10^{-7}$.}
\label{fig:burger-u-to-power-4-comparison}
\end{figure}
\section{The traffic flow model}
Let us consider the traffic flow equation with  non-linear source term
\begin{eqnarray}
\begin{array}{l}
\partial_t q + \partial_x( q \cdot u_{max} \cdot (1 - \frac{q}{ q_{max}} ) = r \cdot q^3\;, x \in [0, 1],\; t \in[0, T]\;, \\

q(x,0) = 0.2 \cdot \frac{(1 + c(x))}{2} + 2.2 \cdot \frac{(1 - c(x))}{2}\;.
 
\end{array}
\end{eqnarray}
where $c(x) = \frac{ (x-0.5)}{x^2 + \delta}$, with $\delta = 10^{-6}$, we implement transmissive boundary condition, model parameters $ r = 2$, $ u_{max} = 3$,$ q_{max} = 0.8$.  

Notice that this source satisfies also that $s(q)=r q^3$ is locally bounded in $\mathbb{R}$ and $s(0)=0$. The source terms is also locally Lipschitz continuous, furthermore, this is an infinitely continuously differentiable function in $\mathbb{R}$. Furthermore, since we apply transmissive boundary conditions we can take also a bounded support of $s$ taking $ supp(u_0)\subset supp(s) $. That is, this problem does satisfy the theorem \ref{theo-2}, hence the iterative procedure should provide the solution of this balance law.  The implementation is carried out with, $CFL = 0.5$, $\alpha = 2.0$, $100$ cells and $Tol = 10^{-7}$. Figure \ref{fig:traffic-flow} shows the results for $w^0$, $w^1$, $w^2$ and $w^4$ at $T = 0.02$, where the tendency to the reference solution is observed. Furthermore, Table \ref{table:traffic-Flow-u-to-power-4} shows the convergence of the iterative process for the traffic flow model.  
The second column shows the sequence $\{ \beta_n \}$. The third column shows the $L^1$ error at the output time between each solution $w^n$ and the reference solution. The fourth column shows the gaining factor, $\tau^n$. We observe that the converged solution $w^9$ has the same performance as the conventional first order approximation. Here, again the convergence is in terms of the tolerance $Tol$. 
Figure \ref{fig:traffic-flow-comparison} shows the comparison between the converged function $w^9$ and the approximate first order accurate solution $q^R$ obtained by the discretization of the original hyperbolic system. 

\begin{figure}
\begin{center}
\includegraphics[scale=0.5]{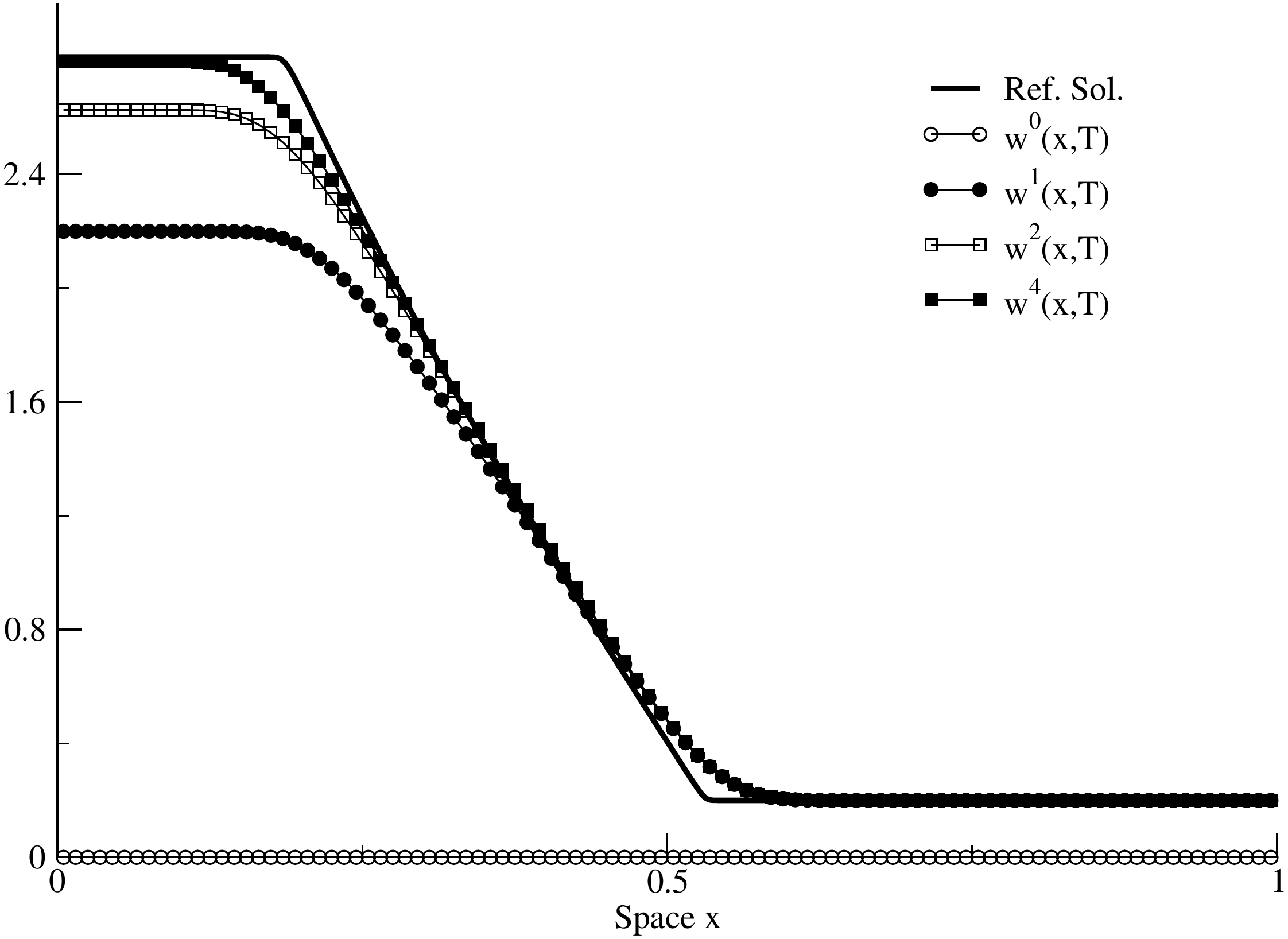}
\end{center}
\caption{Traffic flow equation. Comparison of solutions $w^0$, $w^1$, $w^2$ and $w^4$ at $T =0.02$, against the reference solution for $ r = 2$, $ u_{max} = 3$, $ q_{max} = 0.8$, $100$ cells, $CFL = 0.5$ and $\alpha = 2$.}\label{fig:traffic-flow}
\end{figure}

\begin{figure}
\begin{center}
\includegraphics[scale=0.5]{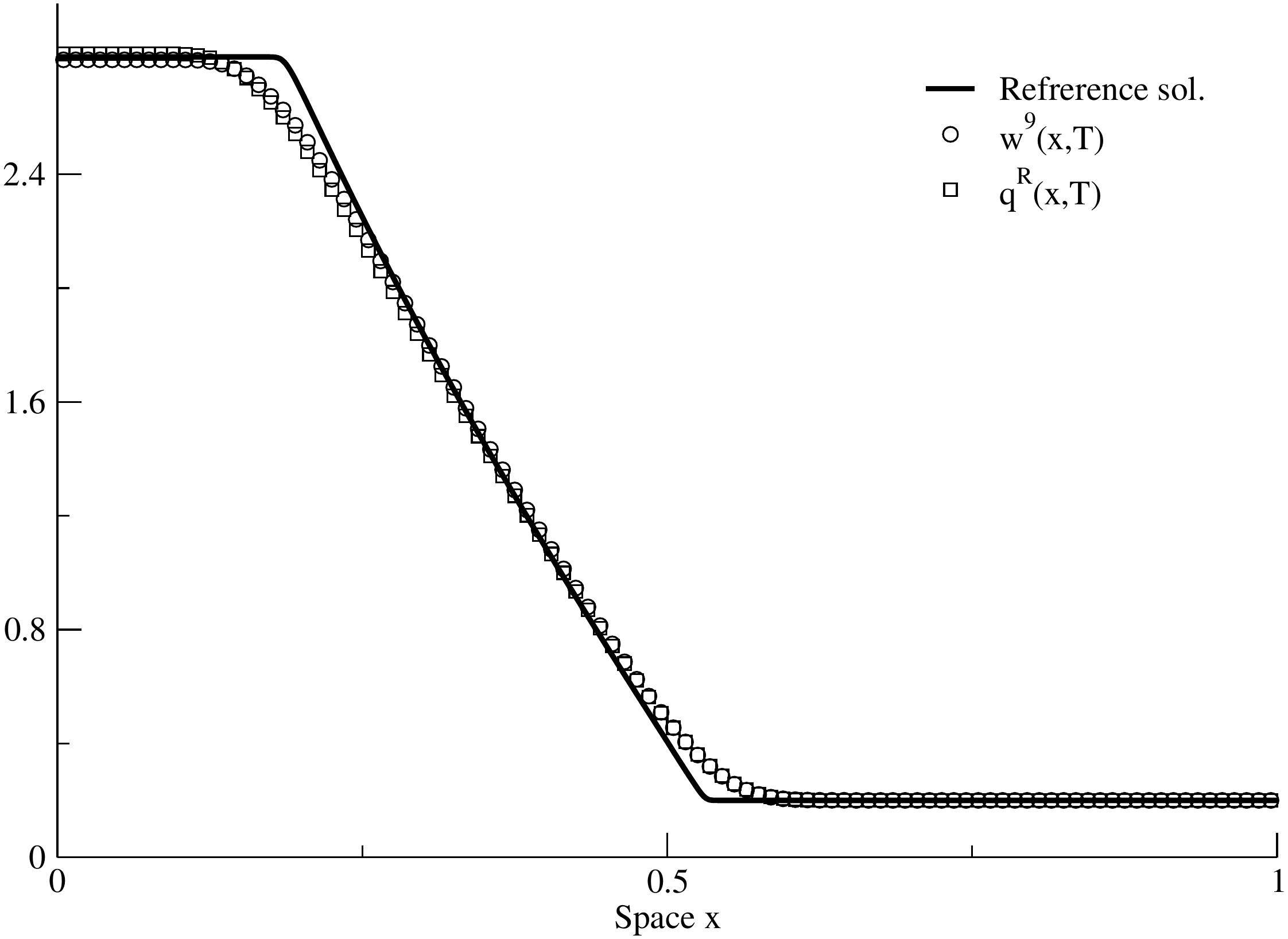}
\end{center}
\caption{Traffic flow equation. 
Comparison of converged solution $w^{9}$, approximated first order $q^R$ and the reference solution at $T =0.02$ for $ r = 2$, $ u_{max} = 3$, $ q_{max} = 0.8$, $100$ cells, $CFL = 0.5$, $\alpha = 2$ and $Tol = 10^{-7}$.}\label{fig:traffic-flow-comparison}
\end{figure}

\begin{table}
\begin{center}
\begin{tabular}{|c|c|c|c|}
\hline
$n$ & $\beta_n $ & $Err^n  $ & $\tau^n $ \\
\hline
\hline

 $ 1$  & $ 0.454545455$  & $0.177248859$  & $ 0.200584 $ \\
 $ 2$  & $ 0.380818913$  & $0.072762354$  & $ 0.488621 $ \\
 $ 3$  & $ 0.361848941$  & $0.043568247$  & $ 0.816036 $ \\
 $ 4$  & $ 0.357488961$  & $0.036910457$  & $ 0.963229 $ \\
 $ 5$  & $ 0.356703991$  & $0.035736574$  & $ 0.994870 $ \\
 $ 6$  & $ 0.356593379$  & $0.035573617$  & $ 0.999427 $ \\
 $ 7$  & $ 0.356580794$  & $0.035555224$  & $ 0.999944 $ \\
 $ 8$  & $ 0.356579603$  & $0.035553489$  & $ 0.999993 $ \\
 $ 9$  & $ 0.356579507$  & $0.035553349$  & $ 0.999997 $ \\

\hline
\end{tabular}
\end{center}
\caption{
Traffic flow equation. Second column: parameter $\beta_n$. Third column: Error of $w^k$ with respect to the exact solution. Fourth column: gaining in accuracy in the $k$ stage. Parameters: $ r = 2$, $ u_{max} = 3$, $ q_{max} = 0.8$, $100$ cells, $CFL = 0.5$, $\alpha = 2$ and $Tol = 10^{-7}$. }
\label{table:traffic-Flow-u-to-power-4}
\end{table}

\section{Conclusions}\label{sec:conclusions}
In this work, we have proved local existence and uniqueness in time, of a class of hyperbolic balance laws with non-linear source terms, satisfying that; i) $s(q) $ is locally bounded in $\mathbb{R}$ and $s(0)=0$; ii) The source terms is locally Lipschitz continuous and; $s \in C^2(\mathbb{R}) \cap H^2(\mathbb{R})$. The constructive proof is realizable into an iterative procedure for obtaining solutions for this class of balance laws, in which the convergence is guaranteed also by existence and uniqueness result. The iterative problem which in principle can be implemented by any approach, here has been solved by using a low-dissipation method in the framework of finite volume schemes. The approach generates a converged solution of first order of accuracy which coincides with the accuracy of the conventional implementation of a low-dissipation method. A high-order method generates a converged solution of high-order too, however, from experiments using a second order methods, not shown here, an increasing on the computational cost has been observed.
The demonstration of the convergence for the case of $L^\infty$ solutions with bounded variation and the exploration of this procedure for systems of hyperbolic balance laws are two important issues to be regarded in a future work.


\section*{Acknowledgements}

G.M thanks to the {\it National Research and Development Agency (Agencia Nacional de Investigación y Desarrollo, ANID)}, in the frame of the research project for Initiation in Research, number 11180926.

\section*{References}

\bibliographystyle{plain}
\bibliography{manuscript} 

\begin{thebibliography}{10}

\bibitem{Attouch:2014a}
Hedy Attouch, Giuseppe Buttazzo, and Gérard Michaille.
\newblock {\em Variational Analysis in Sobolev and BV Spaces}.
\newblock Society for Industrial and Applied Mathematics, Philadelphia, PA,
  2014.

\bibitem{Bouchut:1999a}
F.~Bouchut.
\newblock Construction of bgk models with a family of kinetic entropies for a
  given system of conservation laws.
\newblock {\em Journal of Statistical Physics}, 95:113 -- 170, 1999.

\bibitem{Bressan:2000a}
Alberto Bressan.
\newblock {\em Hyperbolic Systems of Conservation Laws}.
\newblock Oxford Lecture Series in Mathematics and Its Applications, 2000.

\bibitem{Chen:1994a}
G.~Q. Chen, C.~D. Levermore, and T.~P. Liu.
\newblock Hyperbolic conservation laws with stiff relaxation terms and entropy.
\newblock {\em Communications on Pure and Applied Mathematics}, 47(6):787--830,
  1994.

\bibitem{CHEN:2001a}
GOONG CHEN, WEI-MING NI, ALAIN PERRONNET, and JIANXIN ZHOU.
\newblock Algorithms and visualization for solutions of nonlinear elliptic
  equations, part ii: Dirichlet, neumann and robin boundary conditions and
  problems in 3d.
\newblock {\em International Journal of Bifurcation and Chaos},
  11(07):1781--1799, 2001.

\bibitem{Chen:2010a}
Goong Chen and Jianxin Zhou.
\newblock {\em Boundary element methods with applications to nonlinear problems
  : 2nd edition}.
\newblock Springer-Dordrecht, 2010.

\bibitem{Cheng:2000a}
GOONG CHEN, JIANXIN ZHOU, and WEI-MING NI.
\newblock Algorithms and visualization for solutions of nonlinear elliptic
  equations.
\newblock {\em International Journal of Bifurcation and Chaos},
  10(07):1565--1612, 2000.

\bibitem{Dafermos:2016}
C.~M. Dafermos.
\newblock {\em Hyperbolic conservation laws in continuum physics}.
\newblock Springer-Verlag, New York, fourth edition, 2000.

\bibitem{Dafermos:2015a}
Constantine~M. Dafermos.
\newblock Asymptotic behavior of bv solutions to hyperbolic systems of balance
  laws with relaxation.
\newblock {\em Journal of Hyperbolic Differential Equations}, 12, 06 2015.

\bibitem{Dias:2002a}
João-Paulo Dias and Philippe~G. LeFloch.
\newblock Some existence results for conservation laws with source-term.
\newblock {\em Mathematical Methods in the Applied Sciences},
  25(13):1149--1160, 2002.

\bibitem{Douglis:1952}
Avron Douglis.
\newblock Some existence theorems for hyperbolic systems of partial
  differential equations in two independent variables.
\newblock {\em Communications on Pure and Applied Mathematics}, 5(2):119--154,
  1952.

\bibitem{Friedrichs:1948a}
K.~O. Friedrichs.
\newblock Nonlinear hyperbolic differential equations for functions of two
  independent variables.
\newblock {\em American Journal of Mathematics}, 70(3):555--589, 1948.

\bibitem{Jin:1995a}
S.~Jin and Z.~Xin.
\newblock The relaxation schemes for systems of conservation laws in arbitrary
  space dimensions.
\newblock {\em Communications on Pure and Applied Mathematics}, 48:235--277,
  1995.

\bibitem{Kawashima:1988a}
S.~Kawahima and Y.~Shizuta.
\newblock On the normal form of the symmetric hypperbolic-parabolic systems
  associated with the conservation laws.
\newblock {\em Tohoku Mathematical Journal}, 40:449--464, 1988.

\bibitem{Kawashima:1987a}
Shuichi Kawashima.
\newblock Asymptotic stability of maxwellians of the discrete boltzmann
  equation.
\newblock {\em Transport Theory and Statistical Physics}, 16, 06 1987.

\bibitem{Kruzkov:1970a}
S.~N. Kru{\u z}kov.
\newblock First order quasilinear equations in several independent variables.
\newblock {\em Mathematics of the USSR-Sbornik}, 10(2):217, 1970.

\bibitem{Liu:1981b}
Tai-Ping Liu.
\newblock Admissible solutions of hyperbolic conservation laws.
\newblock {\em Memoirs of the American Mathematical Society}, 240:1947--6221,
  1981.

\bibitem{Liu1987}
Tai-Ping Liu.
\newblock Nonlinear resonance for quasilinear hyperbolic equation.
\newblock {\em Journal of Mathematical Physics}, 28(11):2593--2602, 1987.

\bibitem{Rudin:1987a}
W.~Rudin, W.A. RUDIN, and Tata McGraw-Hill~Publishing Company.
\newblock {\em Real and Complex Analysis}.
\newblock Higher Mathematics Series. McGraw-Hill Education, 1987.

\bibitem{Schonbek:1982a}
Maria~Elena Schonbek.
\newblock Convergence of solution to nonlinear dispersive equations.
\newblock {\em Communications in Partial Differential Equations},
  7(8):959--1000, 1982.

\bibitem{Tartar:1979a}
L.~Tartar.
\newblock Compensated compactness and applications to partial differential
  equations.
\newblock In {\em Heriot-Watt Symposium, vol. 4. Pitman: New York}, 1979.

\bibitem{Toro:1999a}
EF~Toro and SJ~Billett.
\newblock {Centred TVD schemes for hyperbolic conservation laws}.
\newblock {\em IMA Journal of Numerical Analysis}, 20(1):47--79, 01 2000.

\bibitem{Toro:2020a}
E.F. Toro, B.~Saggiorato, S.~Tokareva, and A.~Hidalgo.
\newblock Low-dissipation centred schemes for hyperbolic equations in
  conservative and non-conservative form.
\newblock {\em Journal of Computational Physics}, 416:109545, 2020.

\bibitem{Tsuge:2017a}
N.~Tsuge.
\newblock Existence of a global solution for a scalar conservation law with a
  source term.
\newblock {\em Acta Appl Math}, 147:177--186, 2017.

\bibitem{Yong:2004a}
W.~Yong.
\newblock Entropy and global existence for hyperbolic balance laws.
\newblock {\em Arch. Rational Mech. Anal.}, 172:247--266, 2004.

\bibitem{Yong:2001a}
Wen-An Yong.
\newblock {\em Basic Aspects of Hyperbolic Relaxation Systems}, pages 259--305.
\newblock Birkh{\"a}user Boston, Boston, MA, 2001.

\bibitem{Zeng:1999a}
Yanni Zeng.
\newblock Gas dynamics in thermal nonequilibrium¶and general hyperbolic
  systems¶with relaxation.
\newblock {\em Archive for Rational Mechanics and Analysis}, 150, 12 1999.

\end{thebibliography}

\end{document}